\newtheorem{theorem}{Theorem}[section]
\newtheorem{prop}{Proposition}[section]
\newtheorem{lemma}{Lemma}[section]
\newtheorem{rem}{Remark}[section]
\newtheorem{exmp}{Example}[section]
\begin{document}
\author{Mark Pankov}
\title{Embeddings of Grassmann graphs}
\address{Department of Mathematics and Computer Sciences, University of Warmia and Mazury,
S{\l}oneczna 54, 10-710 Olsztyn, Poland}
\email{pankov@matman.uwm.edu.pl markpankov@gmail.com}

\maketitle

\begin{abstract}
Let $V$ and $V'$ be vector spaces of dimension $n$ and $n'$, respectively.
Let $k\in\{2,\dots,n-2\}$ and $k'\in\{2,\dots,n'-2\}$. We describe all
isometric  and $l$-rigid isometric embeddings of the Grassmann graph $\Gamma_{k}(V)$
in  the Grassmann graph $\Gamma_{k'}(V')$.
\end{abstract}

\section{Introduction}
Let $V$ be an $n$-dimensional vector space.
Denote by ${\mathcal G}_k(V)$ the Grassmannian consisting of $k$-dimensional subspaces of $V$.
Two elements of ${\mathcal G}_k(V)$ are {\it adjacent} if their intersection is $(k-1)$-dimensional.
The {\it Grassmann graph} $\Gamma_{k}(V)$ is the graph whose vertex set is ${\mathcal G}_k(V)$ and whose edges are pairs of adjacent
$k$-dimensional subspaces.
By Chow's theorem \cite{Chow}, if $1<k<n-1$ then every automorphism of $\Gamma_{k}(V)$ is induced by
a semilinear automorphism of $V$ or a semilinear isomorphism of $V$ to the dual vector space $V^{*}$
and the second possibility can be realized only in the case when $n=2k$
(if $k=1,n-1$ then any two distinct vertices of $\Gamma_{k}(V)$ are adjacent and every bijective transformation of ${\mathcal G}_k(V)$
is an automorphism of $\Gamma_{k}(V)$).
Some results closely related with Chow's theorem can be found in \cite{BH,Huang,HuangHavlicek,KMP,Lim,Pankov1}
and we refer \cite{Pankov2} for a survey.

Let $V'$ be an $n'$-dimensional vector space.
We investigate isometric embeddings  of $\Gamma_{k}(V)$ in $\Gamma_{k'}(V')$ under assumption that $1<k<n-1$ and $1<k'<n'-1$
(the case $k=k'$ was considered in \cite{KMP}).
Then $n,n'\ge 4$ and the existence of such embeddings implies that
the diameter of $\Gamma_{k}(V)$ is not greater than the  diameter of $\Gamma_{k'}(V')$, i.e.
$$\min\{k,n-k\}\le \min\{k',n'-k'\}.$$
In the case when $k\le n-k$,
we show that every isometric embedding of $\Gamma_{k}(V)$ in $\Gamma_{k'}(V')$
is induced by a semilinear $(2k)$-embedding (a semilinear injection such that the image of every independent $(2k)$-element  subset is independent)
of $V$ in $V'/S$, where $S$ is a $(k'-k)$-dimensional subspace of $V'$,
or a semilinear $(2k)$-embedding of $V$ in $U^{*}$, where $U$ is a $(k'+k)$-dimensional subspace of $V'$
(Theorem \ref{theorem4-1}).
If $k>n-k$ then there are isometric embeddings of $\Gamma_{k}(V)$ in $\Gamma_{k'}(V')$
which can not be induced by semilinear mappings of $V$ to some vector spaces.

Our second result (Theorem \ref{theorem6-1}) is related with so-called $l$-rigid embeddings.
An embedding $f$ of a graph $\Gamma$ in a graph $\Gamma'$ is {\it rigid } if
for every automorphism $g$ of $\Gamma$ there is an automorphism $g'$ of $\Gamma'$
such that the diagram
$$
\begin{array}{ccccccccccc}
\Gamma & \stackrel{f}{\longrightarrow} & \Gamma'\\
\downarrow\lefteqn{g}&&\downarrow\lefteqn{g'}\\
\Gamma & \stackrel{f}{\longrightarrow} & \Gamma'
\end{array}
$$
is commutative, roughly speaking, every automorphism of $\Gamma$ can be extended to an automorphism of $\Gamma'$.
We say that an embedding of $\Gamma_{k}(V)$ in $\Gamma_{k'}(V')$ is $l$-{\it rigid} if every automorphism of
$\Gamma_{k}(V)$ induced by a linear automorphism of $V$ can be extended to the automorphism of $\Gamma_{k'}(V')$
induced by a  linear automorphism of $V'$.

In the case when $n=2k$, every isometric embedding of $\Gamma_{k}(V)$ in $\Gamma_{k'}(V')$ is $l$-rigid.
In general case (we do not require that $k\le n-k$),
every $l$-rigid isometric embedding of $\Gamma_{k}(V)$ in $\Gamma_{k'}(V')$
is induced by a semilinear embedding (a semilinear injection transferring independent subsets to independent subsets)
of $V$ in $V'/S$, where $S$ is a $(k'-k)$-dimensional subspace of $V'$,
or a semilinear embedding of $V$ in $U^{*}$, where $U$ is a $(k'+k)$-dimensional subspace of $V'$.
The proof of this result is based on a characterization of semilinear embeddings (Theorem \ref{theorem5-1}).

Using \cite{Kreuzer}, we establish the existence of isometric embeddings of $\Gamma_{k}(V)$ in $\Gamma_{k'}(V')$
which are not $l$-rigid (Example \ref{exmp6-4}).

\section{Basic facts and definitions}

\subsection{}
Let $\Gamma$ be a connected graph. A subset in the vertex set of $\Gamma$
formed by mutually adjacent vertices is called  a {\it clique}.
Using Zorn lemma, we can show that every clique is contained in a maximal clique.
The {\it distance} $d(v,w)$ between two vertices $v$ and $w$ of $\Gamma$ is defined as the smallest number $i$
such that there exists a path of length $i$ (a path consisting of $i$ edges) connecting $v$ and $w$.
The {\it diameter} of $\Gamma$ is the maximum of all distances $d(v,w)$.

An injective mapping of the vertex set of $\Gamma$ to the vertex set of a graph $\Gamma'$
is called an {\it embedding} of $\Gamma$ in $\Gamma'$ if
vertices of $\Gamma$ are adjacent only in the case when
their images are adjacent vertices of $\Gamma'$.
Every surjective embedding is an isomorphism.
An embedding is said to be {\it isometric} if it preserves the distance between any two vertices.

\subsection{}
Let $k\in\{1,\dots,n-1\}$.
Consider incident subspaces $S,U\subset V$ such that
$$\dim S<k<\dim U$$
and denote by $[S,U]_{k}$ the set formed by all $X\in {\mathcal G}_{k}(V)$ satisfying $S\subset X \subset U$.
In the case when $S=0$ or $U=V$, this set will be denoted by $\langle U]_{k}$ or $[S\rangle_{k}$, respectively.
The {\it Grassmann space} ${\mathfrak G}_{k}(V)$ is the partial linear space whose point set is ${\mathcal G}_{k}(V)$
and whose lines are subsets of type
$$[S,U]_{k},\;S\in {\mathcal G}_{k-1}(V),\;U\in {\mathcal G}_{k+1}(V).$$
It is clear that ${\mathfrak G}_{1}(V)=\Pi_{V}$ and ${\mathfrak G}_{n-1}(V)=\Pi^{*}_{V}$
(we denote by $\Pi_{V}$ the projective space associated with $V$ and write $\Pi^{*}_{V}$ for the corresponding dual projective space).
Two distinct points of ${\mathfrak G}_{k}(V)$ are collinear (joined by a line) if and only if they are adjacent vertices
of the Grassmann graph $\Gamma_{k}(V)$.

If $1<k<n-1$ then there are precisely the following two types of maximal cliques of $\Gamma_{k}(V)$:
\begin{enumerate}
\item[(1)] the {\it top} $\langle U]_{k}$, $U\in {\mathcal G}_{k+1}(V)$,
\item[(2)] the {\it star} $[S\rangle_{k}$, $S\in {\mathcal G}_{k-1}(V)$.
\end{enumerate}
The top  $\langle U]_{k}$ and the star $[S\rangle_{k}$ together with the lines contained in them are projective spaces.
The first projective space is $\Pi^{*}_{U}$ and the second can be identified with $\Pi_{V/S}$.

The distance $d(X,Y)$ between $X,Y\in {\mathcal G}_k(V)$ in the graph $\Gamma_{k}(V)$ is equal to
$$k-\dim(X\cap Y)$$
and the diameter of $\Gamma_{k}(V)$ is equal to $\min\{k,n-k\}$.
The annihilator mapping (which transfers every subspace $S\subset V$ to the annihilator $S^{0}\subset V^{*}$)
induces an isomorphism between $\Gamma_{k}(V)$ and  $\Gamma_{n-k}(V^{*})$.

\subsection{}
Throughout the paper we will suppose that $V$ and $V'$ are left vector spaces over division rings $R$ and $R'$, respectively.
An additive mapping $l:V\to V'$
is said to be {\it semilinear} if there exists a homomorphism
$\sigma:R\to R'$ such that
$$l(ax)=\sigma(a)l(x)$$
for all $x\in V$ and all $a\in R$.
If $l$ is non-zero then  there is only one homomorphism satisfying this condition.
Every non-zero homomorphism of $R$ to $R'$ is injective.

Every semilinear injection of $V$ to $V'$ induces a mapping of ${\mathcal G}_{1}(V)$
to ${\mathcal G}_{1}(V')$ which transfers lines of $\Pi_{V}$ to subsets in lines of $\Pi_{V'}$
(note that this mapping is not necessarily injective). We will use the following version of the
Fundamental Theorem of Projective Geometry \cite{FaureFrolicher,Faure,Havlicek}, see also \cite[Theorem 1.4]{Pankov2}.

\begin{theorem}[C. A. Faure, A. Fr\"{o}licher, H. Havlicek]\label{theorem2-1}
Every mapping of ${\mathcal G}_{1}(V)$ to ${\mathcal G}_{1}(V')$
transferring lines of $\Pi_{V}$ to subsets in lines of $\Pi_{V'}$
is induced by a semilinear injection of $V$ to $V'$.
\end{theorem}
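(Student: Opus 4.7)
The plan is to follow the von Staudt style construction underlying the classical Fundamental Theorem of Projective Geometry, but adapted to the weaker hypothesis here: $f$ need only send each line of $\Pi_V$ into (not onto) some line of $\Pi_{V'}$, and $f$ is not assumed injective on points. The strategy is to recover both a ring homomorphism $\sigma\colon R\to R'$ and a basis-by-basis description of the desired semilinear map from the geometric data of $f$ alone.

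First I would fix a basis $\{e_\alpha\}_{\alpha\in A}$ of $V$, a distinguished index $0\in A$, and nonzero representatives $e'_\alpha\in V'$ with $\langle e'_\alpha\rangle=f(\langle e_\alpha\rangle)$. The $e'_\alpha$ are only determined up to scaling, so a gauge must be fixed: for each $\alpha$ with $f(\langle e_\alpha\rangle)\neq f(\langle e_0\rangle)$, the point $\langle e_0+e_\alpha\rangle$ lies on the line through $\langle e_0\rangle$ and $\langle e_\alpha\rangle$, whose image lies in the line through $\langle e'_0\rangle$ and $\langle e'_\alpha\rangle$; I would rescale $e'_\alpha$ so that $f(\langle e_0+e_\alpha\rangle)=\langle e'_0+e'_\alpha\rangle$.

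Next I would define $\sigma$ by restricting attention to a single fixed line: for $a\in R$, the point $\langle e_0+ae_\alpha\rangle$ lies on the line through $\langle e_0\rangle$ and $\langle e_\alpha\rangle$, so its image can be written $\langle e'_0+\sigma(a)e'_\alpha\rangle$ for a unique $\sigma(a)\in R'$. The core verifications are then (i) that $\sigma$ does not depend on the choice of auxiliary index $\alpha$, and (ii) that $\sigma$ is a ring homomorphism. Both are extracted by writing down explicit collinear configurations in $\Pi_V$ and applying the line-preservation hypothesis: additivity of $\sigma$ uses a triangle-completion diagram in the plane $\langle e_0,e_\alpha,e_\beta\rangle$, while multiplicativity uses a projective cross-ratio style configuration on the same plane with an auxiliary transversal. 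Independence of $\alpha$ is obtained by comparing the two prescriptions against the normalizations fixed in the previous paragraph.

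Having $\sigma$ in hand, I would define $l\bigl(\sum a_\alpha e_\alpha\bigr)=\sum\sigma(a_\alpha)e'_\alpha$. Semilinearity is immediate from the formula, and injectivity follows from the fact, quoted just before the theorem, that every nonzero homomorphism of division rings is injective. To finish, I would verify that $l$ induces $f$ on every coordinate 2-plane (this is essentially the definition of $\sigma$ combined with the normalization), and extend to arbitrary 1-dimensional subspaces via additivity. The main obstacle is step (ii), verifying multiplicativity: the required configuration is more delicate than the one for addition, and in parallel one must handle the degenerate situations in which $f$ collapses basis images together or shrinks a line to a point, so that $\sigma$ is in fact well-defined throughout the construction.
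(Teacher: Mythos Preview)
The paper does not prove Theorem~\ref{theorem2-1} at all: it is quoted as a known result, with citations to Faure--Fr\"olicher, Faure, and Havlicek, and to \cite[Theorem~1.4]{Pankov2}. There is therefore no in-paper argument to compare your proposal against.

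For what it is worth, your outline is the standard von Staudt--type reconstruction that underlies Faure's elementary proof, and as a sketch it is sound. One point deserves correction: your justification of the injectivity of $l$ is misplaced. Injectivity of the ring homomorphism $\sigma$ (which is what the quoted fact about division-ring homomorphisms gives) does \emph{not} by itself force $l(x)=\sum\sigma(a_\alpha)e'_\alpha$ to vanish only at $x=0$, since the chosen vectors $e'_\alpha$ need not be $R'$-linearly independent. The correct argument is that once you have shown $l$ induces $f$ on $1$-dimensional subspaces, injectivity of $l$ is automatic: $f$ is defined on every $\langle x\rangle$ with $x\neq 0$, so $l(x)$ must span $f(\langle x\rangle)$ and hence be nonzero. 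You also correctly flag, but do not resolve, the degenerate cases in which $f$ collapses a line to a point or identifies basis images; in the cited references a non-degeneracy hypothesis (the image of $f$ not contained in a single line) is part of the precise statement, and the construction of $\sigma$ requires at least two basis points with distinct images to get off the ground.
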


A semilinear mapping of $V$ to $V'$ is called a {\it semilinear isomorphism} 
if it is bijective and the associated homomorphism of $R$ to $R'$ is an isomorphism.
If $u$ is a semilinear automorphism of $V$ then the mapping $u_{k}$
sending every $X\in {\mathcal G}_{k}(V)$ to $u(X)$ is an automorphism of $\Gamma_{k}(V)$.
If $n=2k$ and $v:V\to V^{*}$ is a semilinear isomorphism  then
the bijection transferring every $X\in {\mathcal G}_{k}(V)$ to the annihilator of $v(X)$
is an automorphism of $\Gamma_{k}(V)$.

\begin{theorem}[W. L. Chows \cite{Chow}]
Every automorphism of $\Gamma_{k}(V)$, $1<k<n-1$ is induced by a semilinear automorphism of $V$ or
a semilinear isomorphism of $V$ to $V^{*}$; the second possibility can be realized only in the case when $n=2k$.
\end{theorem}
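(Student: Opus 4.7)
The plan is to exploit the classification of maximal cliques of $\Gamma_{k}(V)$ recalled in the excerpt: every maximal clique is a star $[S\rangle_{k}$ for some $S\in\mathcal{G}_{k-1}(V)$ or a top $\langle U]_{k}$ for some $U\in\mathcal{G}_{k+1}(V)$, and with its induced line structure it is a projective space isomorphic to $\Pi_{V/S}$ or to $\Pi^{*}_{U}$, respectively. Let $f$ be an automorphism of $\Gamma_{k}(V)$. Since $f$ preserves adjacency in both directions, it permutes the collection of maximal cliques bijectively. Two stars or two tops meet in at most one vertex, whereas a star $[S\rangle_{k}$ and a top $\langle U]_{k}$ share an entire line precisely when $S\subset U$; counting these intersection types shows that $f$ either preserves each clique type or swaps the two types. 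The swap is possible only when stars and tops have the same projective dimension, i.e.\ $n-k=k$, and in that case composition with the annihilator isomorphism $\Gamma_{k}(V)\to\Gamma_{k}(V^{*})$ (which interchanges stars and tops) reduces the analysis to the type-preserving case with target $V^{*}$.

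It therefore suffices to treat $f$ preserving both clique types. Then $f$ induces bijections $\alpha$ on $\mathcal{G}_{k-1}(V)$ and $\beta$ on $\mathcal{G}_{k+1}(V)$ defined by $f([S\rangle_{k})=[\alpha(S)\rangle_{k}$ and $f(\langle U]_{k})=\langle\beta(U)]_{k}$. The incidence characterization above yields $S\subset U\Leftrightarrow\alpha(S)\subset\beta(U)$. For every $S\in\mathcal{G}_{k-1}(V)$, the restriction
\[
f|_{[S\rangle_{k}}\colon[S\rangle_{k}\longrightarrow[\alpha(S)\rangle_{k}
\]
is a bijection between the projective spaces $\Pi_{V/S}$ and $\Pi_{V/\alpha(S)}$ that sends lines of the Grassmann space to lines; by the Fundamental Theorem of Projective Geometry (Theorem \ref{theorem2-1}) it is induced by a semilinear isomorphism $\ell_{S}\colon V/S\to V/\alpha(S)$. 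Analogously, the restriction of $f$ to each top $\langle U]_{k}$ is induced by a semilinear isomorphism between the duals of $U$ and $\beta(U)$.

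The principal obstacle is to glue the local data $\{\ell_{S}\}$ into a single semilinear automorphism $u\colon V\to V$ inducing $f$. The idea is to exploit overlap: two adjacent $(k-1)$-subspaces $S_{1},S_{2}$ give stars sharing the common vertex $S_{1}+S_{2}\in\mathcal{G}_{k}(V)$, and along any top $\langle U]_{k}$ with $S_{1},S_{2}\subset U$ the maps $\ell_{S_{1}}$ and $\ell_{S_{2}}$ must agree projectively on the line $[S_{1}+S_{2},U]_{k}$. This rigidity forces the associated field homomorphisms of all $\ell_{S}$ to coincide with a common $\sigma\colon R\to R$ (a nonzero semilinear map determines its $\sigma$ uniquely), and compatibility propagates along chains of pairwise incident stars and tops. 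Since $\Gamma_{k-1}(V)$ is connected, fixing a base point $X_{0}$, normalizing by a scalar, and transporting the lifted $\sigma$-semilinear data along such chains produce a well-defined semilinear automorphism $u$ of $V$ inducing $f$. Applying the same construction after the annihilator reduction in the $n=2k$ swap case yields the required semilinear isomorphism $V\to V^{*}$, completing the argument.
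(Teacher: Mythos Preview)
The paper does not supply its own proof of this statement: it is quoted as Chow's classical theorem with a reference to \cite{Chow}, so there is nothing to compare against on the paper's side. I will therefore comment only on the soundness of your outline.

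Your set-up is the standard one and is fine through the point where you obtain the bijections $\alpha$ on $\mathcal{G}_{k-1}(V)$ and $\beta$ on $\mathcal{G}_{k+1}(V)$ together with the incidence relation $S\subset U\Leftrightarrow\alpha(S)\subset\beta(U)$, and where you apply Theorem~\ref{theorem2-1} on each star to get semilinear isomorphisms $\ell_{S}:V/S\to V/\alpha(S)$. The genuine gap is the gluing paragraph. Each $\ell_{S}$ is only defined modulo $S$ and only up to a scalar; the sentence ``fixing a base point $X_{0}$, normalizing by a scalar, and transporting the lifted $\sigma$-semilinear data along such chains produce a well-defined semilinear automorphism $u$ of $V$'' is precisely the hard part and you have not supplied an argument. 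In particular you have not shown that the transported lift is independent of the chain (a monodromy-type statement), nor even that the field automorphisms attached to the various $\ell_{S}$ coincide; the overlap of two stars along a single vertex $S_{1}+S_{2}$ is too thin to force agreement of the underlying $\sigma$'s by itself.

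The route that actually closes this gap---and the one the paper itself uses when proving its own results (see the proof of Theorem~\ref{theorem4-1})---is to avoid local gluing entirely. From the incidence relation you can check that $\alpha$ is an adjacency-preserving bijection of $\mathcal{G}_{k-1}(V)$, hence an automorphism of $\Gamma_{k-1}(V)$, again type-preserving. Iterating, you descend to an automorphism of $\Gamma_{1}(V)$ that sends lines of $\Pi_{V}$ to lines; a \emph{single} application of Theorem~\ref{theorem2-1} then produces one semilinear automorphism $u$ of $V$, and the inclusions $f_{i}(\langle Y]_{i})\subset\langle f_{i+1}(Y)]_{i}$ force $u_{k}=f$. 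This replaces your gluing step with a clean induction and is how Chow's theorem is normally proved.
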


\section{General properties of embeddings}
Let $k\in \{2,\dots,n-2\}$ and $k'\in \{2,\dots,n'-2\}$.
Every embedding of $\Gamma_{k}(V)$ in $\Gamma_{k'}(V')$
transfers maximal cliques of $\Gamma_{k}(V)$ to subsets in maximal cliques of $\Gamma_{k'}(V')$;
moreover, every maximal clique of $\Gamma_{k'}(V')$ contains at most one image of a maximal clique of $\Gamma_{k}(V)$
(otherwise, the preimages of some adjacent vertices of $\Gamma_{k'}(V')$ are non-adjacent which is impossible).

\begin{prop}\label{prop3-1}
For any embedding of $\Gamma_{k}(V)$ in $\Gamma_{k'}(V')$
the image of every maximal clique of $\Gamma_{k}(V)$ is contained in precisely one maximal clique of $\Gamma_{k'}(V')$.
\end{prop}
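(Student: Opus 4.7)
The plan is to argue by contradiction: assume $f(C)$ is contained in two distinct maximal cliques $M_{1},M_{2}$ of $\Gamma_{k'}(V')$ and derive an impossibility. The first step is a short classification of the intersection $M_{1}\cap M_{2}$. Two distinct stars meet in at most one point, and two distinct tops meet in at most one point; the only way the intersection can be larger is when $\{M_{1},M_{2}\}$ consists of a star $[S'\rangle_{k'}$ and an incident top $\langle U']_{k'}$ with $S'\subset U'$, in which case $M_{1}\cap M_{2}=[S',U']_{k'}$ is a line of $\Gamma_{k'}(V')$. Since $C$ is a projective space of dimension at least two (as $k\in\{2,\dots,n-2\}$), we have $|f(C)|\ge 3$, so only the line case survives; write $L=[S',U']_{k'}$ with $f(C)\subset L$.

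The second step is to exhibit a vertex $Y\in {\mathcal G}_{k}(V)\setminus C$ adjacent to at least three vertices of $C$ but not to all of $C$. If $C=\langle W]_{k}$ is a top, pick any $X\in C$, any $(k-1)$-dimensional $S\subset X$, and any $v\in V\setminus W$ (such a $v$ exists because $\dim W=k+1<n$); set $Y=S+\langle v\rangle$. A quick dimension count gives $Y\cap X'=S\cap X'$ for every $X'\in C$, so $Y$ is adjacent to precisely the $|R|+1\ge 3$ vertices of the line $[S,W]_{k}\subset C$, which is a proper subset of $C$ because $|C|\ge |R|^{2}+|R|+1$. The star case is dual and can be handled by the same construction in $V^{*}$ via the annihilator isomorphism $\Gamma_{k}(V)\cong \Gamma_{n-k}(V^{*})$.

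The final step combines these ingredients. A direct computation shows that the common neighbors in $\Gamma_{k'}(V')$ of any three distinct points of $L=[S',U']_{k'}$ are precisely the vertices of $[S'\rangle_{k'}\cup\langle U']_{k'}$ other than those three points. Applying this to $f(Y)$ and three points of $f([S,W]_{k})\subset L$, we conclude $f(Y)\in [S'\rangle_{k'}\cup\langle U']_{k'}$. But each of these two maximal cliques contains all of $L$, so $f(Y)$ is adjacent to every element of $L$, hence to every element of $f(C)\subset L$. This forces $Y$ to be adjacent to every vertex of $C$, contradicting the maximality of $C$ together with $Y\notin C$.

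I expect the main obstacle to be the second step: constructing the witness $Y$ with the prescribed adjacency pattern. This is where the hypothesis $2\le k\le n-2$ is used essentially, both to guarantee the existence of the transversal vector $v\notin W$ (or its dual counterpart in the star case) and to ensure that $C$ is a projective space of dimension at least two, so that its lines are proper subsets. Once $Y$ is at hand, the other steps reduce to routine dimension computations in the Grassmann spaces.
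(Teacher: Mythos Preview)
Your proof is correct, but it takes a different route from the paper's. Both arguments begin the same way: reduce to the situation $f(\mathcal{X})\subset[S',U']_{k'}$, using that two distinct maximal cliques of $\Gamma_{k'}(V')$ meet in at most a line. From there the paper stays at the level of maximal cliques: it picks a second maximal clique $\mathcal{Y}$ of $\Gamma_{k}(V)$ meeting $\mathcal{X}$ in a line, takes any maximal clique $\mathcal{Y}'$ of $\Gamma_{k'}(V')$ containing $f(\mathcal{Y})$, and observes that the line $[S',U']_{k'}$ meets $\mathcal{Y}'$ in more than one point, hence lies entirely in $\mathcal{Y}'$. Then $\mathcal{Y}'$ contains the images of both $\mathcal{X}$ and $\mathcal{Y}$, contradicting the earlier remark that a maximal clique of $\Gamma_{k'}(V')$ can contain the image of at most one maximal clique of $\Gamma_{k}(V)$.

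Your argument instead works at the level of a single vertex: you construct $Y\notin C$ adjacent to a line of $C$ but not to all of $C$, and then compute directly that any vertex adjacent to several points of $[S',U']_{k'}$ must lie in $[S'\rangle_{k'}\cup\langle U']_{k'}$, forcing $f(Y)$ to be adjacent to all of $f(C)$. This is more hands-on and self-contained (it does not invoke the ``one image per maximal clique'' observation), but also longer. Incidentally, two points of the line already suffice in your third step, so the explicit construction of three neighbours in the second step is stronger than needed. The paper's approach is slicker because it recycles the clique-level bookkeeping already set up; yours has the merit of giving an explicit witness to the contradiction.
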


\begin{proof}
Let $f$ be an embedding of $\Gamma_{k}(V)$ in $\Gamma_{k'}(V')$.
Suppose that ${\mathcal X}$ is a maximal clique of $\Gamma_{k}(V)$ and $f({\mathcal X})$ is contained
in two distinct  maximal cliques of $\Gamma_{k'}(V')$. Since the intersection of two distinct maximal cliques
is empty or a one-element set or a line, there exist $S\in{\mathcal G}_{k'-1}(V')$ and $U\in{\mathcal G}_{k'+1}(V')$
such that
\begin{equation}\label{eq3-1}
f({\mathcal X})\subset[S,U]_{k'}.
\end{equation}
We take any maximal clique ${\mathcal Y}\ne{\mathcal X}$ of $\Gamma_{k}(V)$ which intersects ${\mathcal X}$ in a line
and consider a maximal clique ${\mathcal Y}'$ of $\Gamma_{k'}(V')$ containing $f({\mathcal Y})$.
The inclusion \eqref{eq3-1} guarantees that the line
$[S,U]_{k'}$ intersects $f({\mathcal Y})\subset{\mathcal Y}'$ in a set containing more than one element.
Then $[S,U]_{k'}\subset{\mathcal Y}'$
(a line is contained in a maximal clique or intersects it in at most one element).
So, the maximal clique ${\mathcal Y}'$ contains the images of both ${\mathcal X}$ and ${\mathcal Y}$
which are distinct maximal cliques of $\Gamma_{k}(V)$, a contradiction.
\end{proof}

It was noted above that the intersection of two distinct maximal cliques of $\Gamma_{k}(V)$ is empty or a one-element set or a line.
The latter possibility can be realized only in the case when the maximal cliques are of different types ---
one of them is a star and the other is a top. For any distinct maximal cliques ${\mathcal X},{\mathcal Y}$
of $\Gamma_{k}(V)$ there is a sequence of maximal cliques of $\Gamma_{k}(V)$
$${\mathcal X}={\mathcal X}_{0},{\mathcal X}_{1},\dots,{\mathcal X}_{i}={\mathcal Y}$$
such that
${\mathcal X}_{j-1}\cap{\mathcal X}_{j}$ is a line for every $j\in\{1,\dots,i\}$.
This implies the following.

\begin{prop}\label{prop3-2}
For every embedding of $\Gamma_{k}(V)$ in $\Gamma_{k'}(V')$ one of the following possibilities is realized:
\begin{enumerate}
\item[{\rm (A)}] stars go to subsets of stars and tops go to subsets of tops,
\item[{\rm (B)}] stars go to subsets of tops and tops go to subsets of stars.
\end{enumerate}
\end{prop}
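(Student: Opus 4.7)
The plan is to lift $f$ to a map $\widehat{f}$ on maximal cliques. For each maximal clique $\mathcal{C}$ of $\Gamma_{k}(V)$, Proposition \ref{prop3-1} supplies a unique maximal clique $\widehat{f}(\mathcal{C})$ of $\Gamma_{k'}(V')$ containing $f(\mathcal{C})$, and the observation preceding Proposition \ref{prop3-1} shows furthermore that $\widehat{f}$ is injective on distinct cliques. The task is then to compare the type of $\mathcal{C}$ (star or top in $\Gamma_{k}(V)$) with the type of $\widehat{f}(\mathcal{C})$ (star or top in $\Gamma_{k'}(V')$) and to verify that the two either agree on every $\mathcal{C}$ or disagree on every $\mathcal{C}$.

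The local step is to show that whenever maximal cliques $\mathcal{X}$ and $\mathcal{Y}$ of $\Gamma_{k}(V)$ meet in a line $L$, their lifts $\widehat{f}(\mathcal{X})$ and $\widehat{f}(\mathcal{Y})$ are of different types. Injectivity of $\widehat{f}$ makes these lifts distinct, and their intersection contains $f(L)$; since any projective line over a division ring has at least three points and $f$ is injective, $|\widehat{f}(\mathcal{X})\cap\widehat{f}(\mathcal{Y})|>1$. The classification of intersections of maximal cliques of $\Gamma_{k'}(V')$ recalled just before the proposition then forces the intersection to be a line, which in turn forces one of $\widehat{f}(\mathcal{X}), \widehat{f}(\mathcal{Y})$ to be a star and the other a top. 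The same classification, applied inside $\Gamma_{k}(V)$, shows that $\mathcal{X}$ and $\mathcal{Y}$ themselves must already be of different types.

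To globalise, I would invoke the chain $\mathcal{X}=\mathcal{X}_{0},\mathcal{X}_{1},\dots,\mathcal{X}_{i}=\mathcal{Y}$ of maximal cliques with $\mathcal{X}_{j-1}\cap\mathcal{X}_{j}$ a line, mentioned just above the statement. Along this chain both the type in $\Gamma_{k}(V)$ and the type under $\widehat{f}$ alternate, so the parity of $i$ simultaneously controls whether $\mathcal{X}, \mathcal{Y}$ share a type and whether $\widehat{f}(\mathcal{X}), \widehat{f}(\mathcal{Y})$ share a type. Hence stars map uniformly to one type and tops uniformly to the other, i.e.\ exactly one of (A) and (B) holds.

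I do not anticipate a serious obstacle; the only delicate point is the small remark that a projective line has at least three points, so that $f(L)$ cannot collapse to a singleton and the two lifted cliques are forced to meet in a line rather than in a single element.
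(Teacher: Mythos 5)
Your argument is correct and is exactly the one the paper intends: the paper leaves Proposition \ref{prop3-2} as an immediate consequence of the intersection classification and the chain of maximal cliques stated just before it, and your write-up simply fills in the details (lifting $f$ to maximal cliques via Proposition \ref{prop3-1}, the local type-alternation across a shared line, and propagation along the chain). No gap; the remark that a line of the Grassmann space has at least three points (indeed, two would suffice) correctly rules out the collapse of $f(L)$ to a single element.
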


We say that an embedding is of {\it type} (A) or (B)
if the corresponding possibility is realized.

If an embedding $f$ of $\Gamma_{k}(V)$ in $\Gamma_{k'}(V')$ is of type $(A)$
then the embedding of $\Gamma_{k}(V)$ in $\Gamma_{n'-k'}(V'^{*})$ sending every $X\in {\mathcal G}_{k}(V)$
to the annihilator of $f(X)$ is of type (B).

\section{Isometric embeddings}
\setcounter{equation}{0}

A semilinear injection of $V$ to $V'$ is said to be a {\it semilinear $m$-embedding} if
the image of every independent $m$-element subset is independent.
The existence of such mappings implies that $m\le \min\{n,n'\}$.
In the case when $n\le n'$,  semilinear $n$-embeddings of $V$ in $V'$ will be called {\it semilinear embeddings}.

\begin{rem}\label{rem-main1}{\rm
By \cite{Kreuzer}, there exist fields $F$ and $F'$ such that for any natural numbers $p$ and $q$
there is a semilinear $p$-embedding of $F^{p+q}$ in $F'^{p}$.
It is clear that such semilinear $p$-embeddings can not be $(p+1)$-embeddings.
}\end{rem}

Let $l:V\to V'$ be a semilinear $m$-embedding.
For every $p\in\{1,\dots,m\}$ and every $p$-dimensional subspace $X\subset V$
the dimension of the subspace spanned by $l(X)$ is equal to $p$. So, we have the mapping
$$l_{p}:{\mathcal G}_{p}(V)\to {\mathcal G}_{p}(V')$$
$$X\to \langle l(X)\rangle.$$
By \cite[Proposition 2]{KMP}, if $2p\le \min\{n,n'\}$ and $l:V\to V'$ is a $(2p)$-embedding then
$l_p$ is an isometric embedding of $\Gamma_{p}(V)$ in  $\Gamma_{p}(V')$.

Let $k\in \{2,\dots,n-2\}$ and $k'\in \{2,\dots,n'-2\}$.
The existence of isometric embeddings of $\Gamma_{k}(V)$ in  $\Gamma_{k'}(V')$ implies that
$$\min\{k,n-k\}\le \min\{k',n'-k'\}$$
(the diameter of $\Gamma_{k}(V)$ is not greater than the  diameter of $\Gamma_{k'}(V')$).
In the next three examples we suppose that $k\le n-k$, i.e.
\begin{equation}\label{eq4-0}
k\le \min\{k',n-k,n'-k'\}.
\end{equation}

\begin{exmp}\label{exmp4-1}{\rm
Let $S\in {\mathcal G}_{k'-k}(V')$. By \eqref{eq4-0},
$$\dim (V'/S)=n'-k'+k\ge 2k.$$
If $l:V\to V'/S$ is a semilinear $(2k)$-embedding then $l_{k}$ is an isometric embedding of
$\Gamma_{k}(V)$ in  $\Gamma_{k}(V'/S)$.
Let $\pi$ be the natural isometric embedding of $\Gamma_{k}(V'/S)$ in $\Gamma_{k'}(V')$
(which transfers every $k$-dimensional subspace of $V'/S$ to the corresponding $k'$-dimensional subspace of $V'$).
Then $\pi l_{k}$ is an isometric embedding of $\Gamma_{k}(V)$ in $\Gamma_{k'}(V')$ of type (A).
}\end{exmp}

\begin{exmp}\label{exmp4-2}{\rm
Let $U\in {\mathcal G}_{k'+k}(V')$ (by \eqref{eq4-0}, we have $k'+k\le n'$).
If $v:V\to U^{*}$ is a  semilinear $(2k)$-embedding
then $v_{k}$ is an isometric embedding of $\Gamma_{k}(V)$ in $\Gamma_{k}(U^{*})$.
By duality, it can be considered as an isometric embedding of $\Gamma_{k}(V)$ in $\Gamma_{k'}(U)$.
Since $U$ is contained in $V'$, we get an isometric embedding of $\Gamma_{k}(V)$ in $\Gamma_{k'}(V')$ of type (B).
}\end{exmp}

\begin{exmp}\label{exmp4-3}{\rm
Suppose that $n=2k$ and $S\in {\mathcal G}_{k'-k}(V')$, $U\in {\mathcal G}_{k'+k}(V')$ are incident.
Then
$$\dim (U/S)=2k=n.$$
By Example \ref{exmp4-1}, every semilinear embedding of $V$ in $U/S\subset V'/S$ induces an isometric embedding of $\Gamma_{k}(V)$ in $\Gamma_{k'}(V')$.
If $w:V\to (U/S)^{*}$ is a  semilinear embedding then
$w_{k}$ is an isometric embedding of $\Gamma_{k}(V)$ in $\Gamma_{k}((U/S)^{*})$ and, by duality,
it can be considered as an isometric embedding of $\Gamma_{k}(V)$ in $\Gamma_{k}(U/S)$.
As in Example \ref{exmp4-1}, we construct an isometric embedding of $\Gamma_{k}(V)$ in $\Gamma_{k'}(V')$.
}\end{exmp}

\begin{theorem}\label{theorem4-1}
Let $f$ be an isometric embedding of $\Gamma_{k}(V)$ in $\Gamma_{k'}(V')$.
If $k\le n-k$ then one of the following possibilities is realized:
\begin{enumerate}
\item[{\rm (A)}] there is $S\in {\mathcal G}_{k'-k}(V')$
such that $f$ is induced by a semilinear $(2k)$-embedding of $V$ in $V'/S$, see Example \ref{exmp4-1};
\item[{\rm (B)}] there is $U\in {\mathcal G}_{k'+k}(V')$
such that $f$ is induced by a semilinear  $(2k)$-embedding of $V$ in $U^{*}$, see Example \ref{exmp4-2}.
\end{enumerate}
In the case when $n=2k$, there are incident $S\in {\mathcal G}_{k'-k}(V')$ and $U\in {\mathcal G}_{k'+k}(V')$
such that $f$ is induced by a semilinear embedding of $V$ in $U/S$ or a semilinear embedding of $V$ in $(U/S)^{*}$,
see Example \ref{exmp4-3}.
\end{theorem}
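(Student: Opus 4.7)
The plan is to establish conclusion (A) directly and to derive (B) from it by the annihilator duality following Proposition \ref{prop3-2}: once the theorem is known for type (A) embeddings, applying it to the type (A) embedding $X\mapsto f(X)^{0}$ of $\Gamma_{k}(V)$ in $\Gamma_{n'-k'}(V'^{*})$ yields a semilinear $(2k)$-embedding of $V$ into $V'^{*}/T$ for some $T\in{\mathcal G}_{n'-k'-k}(V'^{*})$, and the canonical identification $V'^{*}/T\cong U^{*}$ with $U=T^{0}\in{\mathcal G}_{k'+k}(V')$ gives the form required by (B). I therefore assume throughout that $f$ is of type (A).

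Proposition \ref{prop3-1} then yields maps $\alpha:{\mathcal G}_{k-1}(V)\to{\mathcal G}_{k'-1}(V')$ and $\beta:{\mathcal G}_{k+1}(V)\to{\mathcal G}_{k'+1}(V')$ such that $f([S\rangle_{k})\subset[\alpha(S)\rangle_{k'}$ and $f(\langle U]_{k})\subset\langle\beta(U)]_{k'}$. Whenever $S\subset U$ with $\dim S=k-1$ and $\dim U=k+1$, the image of the line $[S,U]_{k}$ lies in both the star $[\alpha(S)\rangle_{k'}$ and the top $\langle\beta(U)]_{k'}$, hence in the line $[\alpha(S),\beta(U)]_{k'}$, forcing $\alpha(S)\subset\beta(U)$. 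Restricting $f$ to a single star $[S\rangle_{k}\cong\Pi_{V/S}$ thus gives an injection into $\Pi_{V'/\alpha(S)}$ carrying lines into subsets of lines, and Theorem \ref{theorem2-1} produces a semilinear injection $l_{S}:V/S\to V'/\alpha(S)$ inducing that restriction.

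The crux is the production of a single $S\in{\mathcal G}_{k'-k}(V')$ contained in every $f(X)$. I would take $S:=\bigcap_{X\in{\mathcal G}_{k}(V)}f(X)$. Because $k\le n-k$, one can pick complementary $X,Y\in{\mathcal G}_{k}(V)$; isometry gives $d(f(X),f(Y))=k$ and therefore $\dim(f(X)\cap f(Y))=k'-k$, so $\dim S\le k'-k$. For the matching lower bound I plan to fix $X_{0}$ and analyse the family of hyperplanes $\{\alpha(S'):S'\in{\mathcal G}_{k-1}(X_{0})\}$ of $f(X_{0})$: the incidences $\alpha(S')\subset\beta(U')$ for $S'\subset U'$, combined with the semilinear injections $l_{S'}$, pin down a $(k'-k)$-dimensional subspace of $f(X_{0})$ contained in $f(X)$ for every $X$ adjacent to $X_{0}$, and this is then propagated along the connected graph $\Gamma_{k}(V)$ using the isometric hypothesis. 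This global propagation, which exploits the isometric assumption beyond mere adjacency preservation, is the main technical obstacle.

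Once $S$ is obtained, the assignment $\tilde f(X):=f(X)/S$ is an isometric embedding $\Gamma_{k}(V)\to\Gamma_{k}(V'/S)$, and the equidimensional case $k=k'$ from \cite{KMP} furnishes a semilinear $(2k)$-embedding $l:V\to V'/S$ with $\tilde f=l_{k}$, yielding (A). When $n=2k$, any basis of $V$ is a $(2k)$-element independent subset, so its image under $l$ is independent and spans a $(2k)$-dimensional subspace of $V'/S$, necessarily of the form $U/S$ for $U\in{\mathcal G}_{k'+k}(V')$ incident with $S$; hence $l$ is a genuine semilinear embedding $V\to U/S$, and the dual statement with target $(U/S)^{*}$ in case (B) follows by the same annihilator reduction used in the first paragraph.
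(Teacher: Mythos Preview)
Your reduction of case (B) to case (A) via the annihilator and your treatment of the $n=2k$ refinement are both fine and match the paper. The genuine gap is exactly the step you yourself flag as ``the main technical obstacle'': showing that $S:=\bigcap_{X}f(X)$ has dimension at least $k'-k$. Your plan --- fix $X_{0}$, study the hyperplanes $\alpha(S')$ of $f(X_{0})$, use the local semilinear maps $l_{S'}$, then ``propagate'' --- is not a proof. You never explain why the family $\{\alpha(S'):S'\in{\mathcal G}_{k-1}(X_{0})\}$ should have a common $(k'-k)$-dimensional intersection, nor why the candidate subspace obtained at $X_{0}$ coincides with the one obtained at an adjacent $X_{1}$. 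The semilinear injections $l_{S'}:V/S'\to V'/\alpha(S')$ live on different quotients and you give no mechanism for gluing them. As it stands this is a statement of hope, not an argument.

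The paper avoids this difficulty entirely by a different idea: it proves that the induced map $f_{k-1}:{\mathcal G}_{k-1}(V)\to{\mathcal G}_{k'-1}(V')$ is itself an \emph{isometric} embedding of type (A). Adjacency preservation is easy; the reverse inequality $d(X,Y)\le d(f_{k-1}(X),f_{k-1}(Y))$ is where the hypothesis $2k\le n$ is used --- it guarantees $X'\supset X$, $Y'\supset Y$ in ${\mathcal G}_{k}(V)$ with $X\cap Y=X'\cap Y'$, so that $d(X,Y)=d(X',Y')-1$ and the isometry of $f$ can be fed back. One then iterates down to $f_{1}:{\mathcal G}_{1}(V)\to{\mathcal G}_{k'-k+1}(V')$, whose image is a clique; since $f_{1}$ sends distinct tops into distinct tops, this clique must lie in a star $[S\rangle_{k'-k+1}$, and Theorem \ref{theorem2-1} applied once (to $f_{1}$, not to each star of $\Gamma_{k}$) produces the global semilinear $(2k)$-embedding $l:V\to V'/S$. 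After you have $S$, reducing to the equidimensional result of \cite{KMP} as you propose is legitimate, but the paper's descent argument is what actually delivers $S$.
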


\begin{rem}{\rm
Suppose that $f$ is an isometric embedding of $\Gamma_{k}(V)$ in $\Gamma_{k'}(V')$ and $k>n-k$.
Consider the mapping which sends every $X\in {\mathcal G}_{n-k}(V^{*})$ to $f(X^{0})$ (recall that $X^{0}$ is the annihilator of $X$).
This is an isometric embedding of $\Gamma_{n-k}(V^{*})$ in $\Gamma_{k'}(V')$.
Since $n-k<n-(n-k)$, it is induced by a semilinear $2(n-k)$-embedding of $V^{*}$ in one of vector spaces described above.
}\end{rem}

\begin{proof}[Proof of Theorem \ref{theorem4-1}]
Suppose that $f$ is an embedding of type (A). By Subsection 3, there exists an injective mapping
$$f_{k-1}:{\mathcal G}_{k-1}(V)\to {\mathcal G}_{k'-1}(V')$$
such that
$$f([X\rangle_k)\subset [f_{k-1}(X)\rangle_{k'}\;\;\;\;\;\forall\;X\in {\mathcal G}_{k-1}(V).$$
Then
$$
f_{k-1}(\langle Y]_{k-1})\subset \langle f(Y)]_{k'-1}\;\;\;\;\;\forall\;Y\in {\mathcal G}_{k}(V).
$$
Since for any two adjacent vertices there is a top containing them,
the latter inclusion implies that $f_{k-1}$ is adjacency preserving.
Thus for any $X,Y\in {\mathcal G}_{k-1}(V)$ we have
$$d(X,Y)\ge d(f_{k-1}(X),f_{k-1}(Y)).$$
We prove the inverse inequality.

The condition $2k\le n$ implies the existence of $X',Y'\in {\mathcal G}_{k}(V)$
such that $X\subset X'$, $Y\subset Y'$ and
$$X\cap Y=X'\cap Y'.$$
Then
\begin{equation}\label{eq4-1}
d(X,Y)=d(X',Y')-1
\end{equation}
(indeed, $d(X,Y)=k-1-\dim(X\cap Y)=k-1-\dim(X'\cap Y')=d(X',Y')-1$).
Since $f_{k-1}$ is induced by $f$,
we have
$$f_{k-1}(X)\subset f(X')\;\mbox{ and }\;f_{k-1}(Y)\subset f(Y')$$
which guarantees that
\begin{equation}\label{eq4-2}
\dim(f_{k-1}(X)\cap f_{k-1}(Y))\le \dim(f(X')\cap f(Y')).
\end{equation}
Using \eqref{eq4-1} and \eqref{eq4-2}, we get the following
$$d(X,Y)=d(X',Y')-1=d(f(X'),f(Y'))-1=k'-1-\dim(f(X')\cap f(Y'))\le$$
$$k'-1-\dim(f_{k-1}(X)\cap f_{k-1}(Y))=d(f_{k-1}(X),f_{k-1}(Y)).$$

So, $f_{k-1}$ is an isometric embedding of $\Gamma_{k-1}(V)$ in $\Gamma_{k'-1}(V')$.
This is an embedding of type (A) (it was established above that $f_{k-1}$ sends tops to subsets of tops).
Step by step, we construct a sequence of isometric embeddings
$$f_{i}:{\mathcal G}_{i}(V)\to {\mathcal G}_{k'-k+i}(V),\;\;\;\;i=k,\dots,1,$$
of $\Gamma_{i}(V)$ in $\Gamma_{k-k'+i}(V')$ such that $f_{k}=f$ and we have
$$
f_{i}([X\rangle_{i})\subset[ f_{i-1}(X)\rangle_{k'-k+i}
\;\;\;\;\;\forall\; X\in {\mathcal G}_{i-1}(V)
$$
and
\begin{equation}\label{eq4-3}
f_{i-1}(\langle Y]_{i-1})\subset\langle f_i(Y)]_{k'-k+i-1}
\;\;\;\;\;\forall\;Y\in {\mathcal G}_{i}(V)
\end{equation}
if $i>1$.

The image of $f_1$ is a clique of $\Gamma_{k'-k+1}(V')$.
This clique can not be contained in any top (otherwise, there is $X'\in {\mathcal G}_{k'-k+2}(V')$ such that
$f_{2}(X)=X'$ for every $X\in {\mathcal G}_{2}(V)$ and $f_{2}$ is not injective).
Therefore, there is $S\in {\mathcal G}_{k'-k}(V')$ such that the image of $f_1$ is contained in the star $[S\rangle_{k'-k+1}$.

By \eqref{eq4-3}, $f_1$ transfers lines of $\Pi_{V}$ to subsets of lines contained in $[S\rangle_{k'-k+1}$.
It was noted in Subsection 2.2 that  the star $[S\rangle_{k'-k+1}$ (together with all lines contained in it) can be identified
with the projective space $\Pi_{V'/S}$.
Theorem \ref{theorem2-1} shows that $f_{1}$ is induced by a semilinear injection $l:V\to V'/S$.

Using \eqref{eq4-3}, we establish that
$$f_{1}(\langle X]_{1})\subset \langle f(X)]_{k'-k+1}
\;\;\;\;\;\forall\;X\in {\mathcal G}_{k}(V).$$
On the other hand,
$$f_{1}(\langle X]_{1})\subset \langle \pi(\langle l(X)\rangle)]_{k'-k+1}
\;\;\;\;\;\forall\;X\in {\mathcal G}_{k}(V),$$
where $\pi$ is the mapping which transfers  every subspace of $V'/S$ to the corresponding subspace of $V'$.
Since the intersection of two distinct tops contains at most one element, we get
$$f(X)=\pi(\langle l(X)\rangle)
\;\;\;\;\;\forall\;X\in {\mathcal G}_{k}(V)$$
which means that $f=\pi l_k$.

Every independent $(2k)$-element subset $A\subset V$ can be presented as the disjoint union of two
independent $k$-element subsets $A_1$ and $A_2$.
Then
$$d(f(\langle A_1\rangle), f(\langle A_2\rangle))=d(\langle A_1\rangle, \langle A_2\rangle)=k$$
which means that $\langle l(A_1)\rangle$ and $\langle l(A_2)\rangle$
are $k$-dimensional subspaces of $V'/S$ intersecting in $0$. Hence the subspace spanned by $l(A_{1}\cup A_{2})=l(A)$
is $(2k)$-dimensional.

So, $l$ is a semilinear $(2k)$-embedding of $V$ in $V'/S$  and $f$ is as in Example \ref{exmp4-1}.
In the case when $n=2k$, $l$ is a semilinear embedding and
the image of $l$ is contained in $U/S$, where $U\in {\mathcal G}_{k'+k}(V')$.

Now suppose that $f$ is an embedding of type (B).
By duality, $f$ can be considered as an isometric embedding of $\Gamma_{k}(V)$ in $\Gamma_{n'-k'}(V'^{*})$
(this embedding sends every $X\in{\mathcal G}_{k}(V)$ to the annihilator of $f(X)$).
We get an embedding  of type (A) and its image is contained in
$$[S'\rangle_{n'-k'},\;\;S'\in {\mathcal G}_{n'-k'-k}(V'^{*});$$
in the case when $n=2k$, the image is contained in
$$[S',U']_{n'-k'},\;\;S'\in {\mathcal G}_{n'-k'-k}(V'^{*}),\;U'\in {\mathcal G}_{n'-k'+k}(V'^{*}).$$
The image of $f$ is contained in $\langle U]_{k'}$, where $U\in{\mathcal G}_{k'+k}(V')$ is the annihilator of $S'$.
Thus $f$ is an isometric embedding of $\Gamma_{k}(V)$ in $\Gamma_{k'}(U)$. By duality, $f$ can be considered as
an isometric embedding of $\Gamma_{k}(V)$ in $\Gamma_{k}(U^*)$ of type (A). Hence it is induced by
a semilinear $(2k)$-embedding of $V$ in $U^{*}$, i.e. $f$ is as in Example \ref{exmp4-2}.

If $n=2k$ then  the image of $f$ is contained in $[S,U]_{k'}$, where
$S\in{\mathcal G}_{k'-k}(V')$ and $U\in{\mathcal G}_{k'+k}(V')$ are the annihilators of $U'$ and $S'$, respectively.
This means that $f=\pi f'$, where $f'$ is an isometric embedding of $\Gamma_{k}(V)$ in $\Gamma_{k}(U/S)$ of type (B)
and $\pi$ is the mapping which transfers  every subspace of $V'/S$ to the corresponding subspace of $V'$.
Since
$$\dim(U/S)=2k,$$
$f'$ can be considered as an isometric embedding of $\Gamma_{k}(V)$ in $\Gamma_{k}((U/S)^*)$ of type (A).
The latter embedding is induced by a semilinear embedding of $V$ in $(U/S)^*$ and $f$ is as in Example \ref{exmp4-3}.
\end{proof}

\begin{rem}{\rm
Using the same idea, the author describes
the images of isometric embeddings of Johnson graphs in Grassmann graphs \cite[Theorem 4]{Pankov3}.
}\end{rem}
\section{Characterization of semilinear embeddings}
\setcounter{equation}{0}

\begin{theorem}\label{theorem5-1}
Let $l:V\to V'$ be a semilinear injection.
Then $l$ is a semilinear embedding if and only if 
for every linear automorphism  $u\in {\rm GL}(V)$ there is a linear automorphism  $u'\in {\rm GL}(V')$
such that the diagram 
\begin{equation}\label{eq5-1}
\begin{array}{ccccccccccc}
V & \stackrel{l}{\longrightarrow} & V'\\
\downarrow\lefteqn{u}&&\downarrow\lefteqn{u'}\\
V & \stackrel{l}{\longrightarrow} & V'
\end{array}
\end{equation}
is commutative.
\end{theorem}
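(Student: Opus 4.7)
The plan is to treat the two directions of the equivalence separately; both rest on manipulations with a single basis of $V$.

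For the forward direction, I fix a basis $\{e_1,\dots,e_n\}$ of $V$ and exploit that, since $l$ is a semilinear embedding (hence a semilinear $n$-embedding with $n\le n'$), both $\{l(e_i)\}_{i=1}^n$ and $\{l(u(e_i))\}_{i=1}^n$ are linearly independent in $V'$. I extend each of these families to a basis of $V'$ by adjoining auxiliary vectors $\{f_j\}_{j=1}^{n'-n}$ and $\{g_j\}_{j=1}^{n'-n}$, respectively, and define $u'\in\mathrm{GL}(V')$ linearly by $u'(l(e_i))=l(u(e_i))$ and $u'(f_j)=g_j$. A routine check on a general vector $\sum a_i e_i$, using semilinearity of $l$ with associated homomorphism $\sigma\colon R\to R'$ together with $R'$-linearity of $u'$, verifies $u'\circ l=l\circ u$.

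For the reverse direction, I argue by contradiction. Since $l$ is a semilinear embedding precisely when the image of some (equivalently, any) basis of $V$ is linearly independent in $V'$, failure of this property (including the case $n>n'$, where any $n$ vectors in $V'$ are automatically dependent) gives a basis $\{e_1,\dots,e_n\}$ of $V$ with $\{l(e_i)\}$ dependent. After relabeling I may write $l(e_n)=\sum_{i=1}^{n-1}c_i\,l(e_i)$ with $c_i\in R'$. The decisive step is the choice of the elementary transvection $u\in\mathrm{GL}(V)$ defined by $u(e_n)=e_n+e_1$ and $u(e_i)=e_i$ for $i<n$. Let $u'\in\mathrm{GL}(V')$ be furnished by the hypothesis. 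Then $u'(l(e_i))=l(e_i)$ for $i<n$, so $R'$-linearity of $u'$ applied to the dependence relation yields $u'(l(e_n))=l(e_n)$. On the other hand, additivity of $l$ gives $u'(l(e_n))=l(u(e_n))=l(e_n)+l(e_1)$, so $l(e_1)=0$, contradicting the injectivity of the semilinear injection $l$ (recall that $\sigma$ is injective as a non-zero homomorphism of division rings).

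The principal obstacle is identifying, in the reverse direction, an automorphism $u$ rigid enough that the dependence relation forces $u'$ to fix $l(e_n)$, yet which moves $e_n$ just enough that additivity of $l$ makes $l(e_1)$ appear on the right-hand side. The elementary transvection above is the natural and minimal such choice, and it is what separates the linear hypothesis on $u$ from the semilinear conclusion on $l$.
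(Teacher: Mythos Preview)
Your proof is correct. The forward direction is essentially identical to the paper's. The reverse direction, however, takes a genuinely different and more elementary route.

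The paper's reverse direction invokes an external result (Theorem~\ref{theorem5-2}, a characterization of finite subsets of ${\mathcal G}_1(V)$ all of whose permutations extend to semilinear automorphisms) to conclude that the projective images $\{\langle l(x_i)\rangle\}$ of a basis form either an independent set or an $(n-1)$-simplex, and then rules out the simplex case by comparing two automorphisms $u,v\in{\rm GL}(V)$ that agree on $x_1,\dots,x_{n-1}$ but differ on $x_n$. Your argument bypasses Theorem~\ref{theorem5-2} entirely: once $l(e_n)$ lies in the $R'$-span of $l(e_1),\dots,l(e_{n-1})$, a single transvection $u$ fixing $e_1,\dots,e_{n-1}$ forces $u'$ to fix $l(e_n)$ by $R'$-linearity, while additivity of $l$ forces $u'(l(e_n))=l(e_n)+l(e_1)$. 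This is both shorter and self-contained; the paper's approach, on the other hand, highlights the structural dichotomy (simplex versus independent) and thereby explains \emph{why} the extension property is so restrictive. Note also that your argument does not require the coefficients $c_i$ to be non-zero or the vectors $l(e_1),\dots,l(e_{n-1})$ to be independent, whereas the paper's comparison of coefficients relies on the simplex condition for uniqueness.
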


To prove Theorem \ref{theorem5-1} we use the following result.

\begin{theorem}[M. Pankov \cite{Pankov3}]\label{theorem5-2}
For a finite subset ${\mathcal X}\subset {\mathcal G}_{1}(V)$ the following conditions are equivalent:
\begin{enumerate}
\item[$\bullet$] every permutation on ${\mathcal X}$ is induced by a semilinear automorphism of $V$,
\item[$\bullet$] ${\mathcal X}$ is a simplex or an independent subset of $\Pi_{V}$.
\end{enumerate}
\end{theorem}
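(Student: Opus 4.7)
My plan is to prove the two directions separately; the forward direction is a short explicit construction, while the reverse direction is the substance and hinges on Theorem \ref{theorem5-2}.

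\emph{Forward direction.} Assume $l$ is a semilinear embedding with associated homomorphism $\sigma:R\to R'$, and fix a basis $e_1,\dots,e_n$ of $V$. Both $l(e_1),\dots,l(e_n)$ and $l(u(e_1)),\dots,l(u(e_n))$ are independent in $V'$ for any $u\in\mathrm{GL}(V)$, so I extend each to a basis of $V'$ by adjoining respectively $f_1,\dots,f_{n'-n}$ and $g_1,\dots,g_{n'-n}$. The $R'$-linear automorphism $u'$ determined by $u'(l(e_i))=l(u(e_i))$ and $u'(f_j)=g_j$ lies in $\mathrm{GL}(V')$, and a direct computation on $x=\sum a_i e_i$ shows that both $u'(l(x))$ and $l(u(x))$ equal $\sum\sigma(a_i)\,l(u(e_i))$, so the diagram commutes.

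\emph{Reverse direction.} Assume every $u\in\mathrm{GL}(V)$ admits a lift $u'\in\mathrm{GL}(V')$ making the diagram commute. Fix a basis $e_1,\dots,e_n$ of $V$ and write $L_i=\langle l(e_i)\rangle$; the goal is to show that $l(e_1),\dots,l(e_n)$ are independent. I first rule out $L_i=L_j$ for distinct $i,j$: if $l(e_i)=c\,l(e_j)$ with $c\in R'\setminus\{0\}$, the lift of the transvection $u:e_i\mapsto e_i+e_j$ (other basis vectors fixed) satisfies both $u'(l(e_i))=l(e_i)+l(e_j)=(c+1)l(e_j)$ (from the diagram) and $u'(l(e_i))=c\,u'(l(e_j))=c\,l(e_j)$ (by $R'$-linearity), forcing $l(e_j)=0$, which contradicts injectivity. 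Next, any permutation $\tau$ of $\{1,\dots,n\}$ is realized on the basis by some $u_\tau\in\mathrm{GL}(V)$, whose lift $u'_\tau$ is a linear---hence semilinear---automorphism of $V'$ permuting the $n$-element set $\{L_1,\dots,L_n\}$ by $\tau$. Theorem \ref{theorem5-2} then yields that $\{L_1,\dots,L_n\}$ is either independent in $\Pi_{V'}$ or a simplex, i.e.\ $n$ points spanning an $(n-1)$-dimensional subspace with any $n-1$ of them independent.

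Excluding the simplex case is the main obstacle. In that case there is a relation $\sum_{i=1}^{n}c_i\,l(e_i)=0$ with every $c_i\ne 0$ (a vanishing coefficient would make $n-1$ of the $l(e_i)$ dependent, contradicting the simplex property). Applying the $R'$-linear lift of the transvection $u:e_1\mapsto e_1+e_2$ (other basis vectors fixed) to this relation produces
$$c_1\,l(e_1)+(c_1+c_2)\,l(e_2)+\sum_{i\ge 3}c_i\,l(e_i)=0,$$
and subtracting the original relation gives $c_1\,l(e_2)=0$; since $c_1\ne 0$ in the division ring $R'$, we get $l(e_2)=0$, again contradicting injectivity. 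Thus $\{L_i\}$ is independent and so are $l(e_1),\dots,l(e_n)$, so $l$ is a semilinear embedding. The crux is that Theorem \ref{theorem5-2} alone leaves both alternatives open, and using the full $R'$-linearity of the lift---rather than only its permutation action on the lines---is precisely what breaks the simplex relation.
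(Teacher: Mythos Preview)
Your proposal does not address the stated theorem. Theorem~\ref{theorem5-2} is a statement about an arbitrary finite set ${\mathcal X}\subset{\mathcal G}_1(V)$: it asserts that \emph{every} permutation of ${\mathcal X}$ extends to a semilinear automorphism of $V$ if and only if ${\mathcal X}$ is independent or a simplex. There is no semilinear map $l:V\to V'$, no second space $V'$, and no commutative diagram in its hypothesis or conclusion. What you have written is instead a proof of Theorem~\ref{theorem5-1} (the characterization of semilinear embeddings via the ${\rm GL}$-lifting property), and indeed your argument \emph{invokes} Theorem~\ref{theorem5-2} as a black box in the reverse direction. If your goal were truly Theorem~\ref{theorem5-2}, this would be circular.

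Note also that the paper does not contain a proof of Theorem~\ref{theorem5-2}; it is quoted from \cite{Pankov3} and used as an ingredient. So there is no ``paper's own proof'' of this statement to compare against. If, on the other hand, you meant to prove Theorem~\ref{theorem5-1}, then your argument is correct and slightly different from the paper's: to exclude the simplex case, the paper compares the coefficients of the simplex relation across two bases $u(B)$ and $v(B)$ that agree in $n-1$ vectors, while you apply the lift of a single transvection directly to the relation and subtract. Both work; your version is a touch shorter and makes the role of $R'$-linearity of $u'$ (as opposed to mere semilinearity) just as transparent.
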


Recall that $Q_{1},\dots,Q_{m}\in {\mathcal G}_{1}(V)$ form an independent subset of $\Pi_{V}$
if non-zero vectors $x_{1}\in Q_{1},\dots,x_{m}\in Q_{m}$
form an independent subset of $V$.
An $(m+1)$-element subset ${\mathcal X}\subset {\mathcal G}_{1}(V)$ is called an $m$-{\it simplex} of $\Pi_{V}$ if
it is not independent and every $m$-element subset of ${\mathcal X}$ is independent \cite[Section III.3]{Baer}.

\begin{rem}\label{rem5-1}{\rm
If $x_{1},\dots,x_{m}\in V$ and $\langle x_{1}\rangle,\dots,\langle x_{m}\rangle$ form an $(m-1)$-simplex
then $x_{1},\dots,x_{m-1}$ are linearly independent and
$x_{m}=\sum^{m-1}_{i=1}a_{i}x_{i},$
where every scalar $a_i$ is non-zero.
}\end{rem}

\begin{proof}[Proof of Theorem \ref{theorem5-1}]
Suppose that $l:V\to V'$ is a semilinear embedding.
Let $\{x_i\}^{n}_{i=1}$ be a base of $V$. For every vector $x=\sum^{n}_{i=1} a_ix_i$ and every linear automorphism $u\in {\rm GL}(V)$ we have
$$l(x)=\sum^{n}_{i=1}\sigma(a_{i})l(x_i)
\;\mbox{ and }\;
l(u(x))=\sum^{n}_{i=1}\sigma(a_{i})l(u(x_i)),$$
where $\sigma:R\to R'$ is the homomorphism associated with $l$.
Since $\{l(x_i)\}^{n}_{i=1}$ and $\{l(u(x_i))\}^{n}_{i=1}$ both are independent subsets of $V'$, the diagram
\eqref{eq5-1} is commutative for any linear automorphism $u'\in {\rm GL}(V')$ transferring every $l(x_i)$ to $l(u(x_i))$.

Conversely, suppose that 
for every linear automorphism  $u\in {\rm GL}(V)$ there is a linear automorphism  $u'\in {\rm GL}(V')$
such that the diagram \eqref{eq5-1} is commutative.
Let $B=\{x_i\}^{n}_{i=1}$ be a base of $V$.
Every permutation on the associated base of $\Pi_{V}$ is induced by a linear automorphism of $V$.
Then, by our assumption, every permutation on
the set
$${\mathcal X}(B):=\{\langle l(x_{i})\rangle\}^{n}_{i=1}$$
is induced by a linear automorphism of $V'$.
Theorem \ref{theorem5-2} implies that ${\mathcal X}(B)$ is an $(n-1)$-simplex or an independent subset of $\Pi_{V'}$.
In the second case, $l(x_1),\dots,l(x_{n})$ are linearly independent and $l$ is a semilinear embedding.

If ${\mathcal X}(B)$ is an $(n-1)$-simplex then ${\mathcal X}(B')$ is an $(n-1)$-simplex for every base $B'\subset V$ 
(indeed, 
if there is a base $B'\subset V$ such that ${\mathcal X}(B')$ is an independent subset of $\Pi_{V'}$
then $l$ is a semilinear embedding and ${\mathcal X}(B')$ is an independent subset for every base $B'\subset V$).
We need to show that this possibility can not be realized.

So, suppose that ${\mathcal X}(B)$ is an $(n-1)$-simplex.
By Remark \ref{rem5-1}, $\{l(x_i)\}^{n-1}_{i=1}$ is an independent subset of $V'$ and
\begin{equation}\label{eq5-2}
l(x_{n})=\sum^{n-1}_{i=1}a_il(x_i),
\end{equation}
where every scalar $a_i$ is non-zero.
Let $u$ be an automorphism of $V$. Then $u(B)$ is a base of $V$ and ${\mathcal X}(u(B))$ is an $(n-1)$-simplex.
The latter implies that
$\{l(u(x_i))\}^{n-1}_{i=1}$ is an independent subset of $V'$ and
\begin{equation}\label{eq5-3}
l(u(x_{n}))=\sum^{n-1}_{i=1}b_il(u(x_i)),
\end{equation}
where every scalar $b_i$ is non-zero.
If $u'$ is a linear automorphism of $V'$ such that the diagram \eqref{eq5-1} is commutative then
it transfers every $l(x_i)$ to $l(u(x_i))$ and \eqref{eq5-2}, \eqref{eq5-3} show that
$a_{i}=b_{i}$ for every $i$, i.e.
$$
l(u(x_{n}))=\sum^{n-1}_{i=1}a_il(u(x_i)).
$$
Consider a linear automorphism $v\in {\rm GL}(V)$ such that $u(x_{i})=v(x_{i})$ if $i\le n-1$ and 
$u(x_{n})\ne v(x_{n})$.
By the arguments given above,
$$l(v(x_{n}))=\sum^{n-1}_{i=1}a_il(v(x_i))=\sum^{n-1}_{i=1}a_il(u(x_i))=l(u(x_{n}))$$
which contradicts the injectivity of $l$.
\end{proof}

\section{$l$-rigid isometric embeddings}
\setcounter{equation}{0}

As above, we suppose that $k\in \{2,\dots,n-2\}$ and $k'\in \{2,\dots,n'-2\}$.
Let $f$ be an embedding of $\Gamma_{k}(V)$ in $\Gamma_{k'}(V')$.
We say that $f$ is $l$-{\it rigid} if 
for every linear automorphism $u\in {\rm GL}(V)$ there is a linear automorphism $u'\in {\rm GL}(V')$
such that the diagram 
$$
\begin{array}{ccccccccccc}
\Gamma_{k}(V) & \stackrel{f}{\longrightarrow} & \Gamma_{k'}(V')\\
\downarrow\lefteqn{u_k}&&\downarrow\lefteqn{u'_{k'}}\\
\Gamma_{k}(V) & \stackrel{f}{\longrightarrow} & \Gamma_{k'}(V')
\end{array}
$$
is commutative, i.e.
every automorphism of $\Gamma_{k}(V)$  induced by a linear automorphism of $V$
can be extended to the automorphism of $\Gamma_{k'}(V')$ induced by a linear automorphism of $V'$.

\begin{exmp}\label{exmp6-1}{\rm
Suppose that $n=2k$ and $V$ is a subspace of $V'$.
We also require that the associated division ring is isomorphic to the opposite division ring.
This implies the existence of semilinear isomorphisms of $V$ to $V^{*}$.
Then the natural embedding of $\Gamma_{k}(V)$ in $\Gamma_{k}(V')$ is not rigid,
since every automorphism of $\Gamma_{k}(V)$ induced by a semilinear isomorphism of $V$ to $V^{*}$
can not be extended to an automorphism of $\Gamma_{k}(V')$. 
It is clear that this embedding is $l$-rigid.
}\end{exmp}

Let $u$ be a linear automorphism of $V$ and let $u^*$ be the adjoint linear automorphism of $V^*$.
The linear automorphism ${\check u}:=(u^{*})^{-1}$ is called the {\it contragradient} of $u$.
It transfers the annihilator of every subspace $S\subset V$ to
the annihilator of $u(S)$ \cite[Section 1.3.3]{Pankov2}.

\begin{lemma}\label{lemma6-1}
If $f$ is an $l$-rigid embedding of $\Gamma_{k}(V)$ in $\Gamma_{k'}(V')$
then the same holds for the following two embeddings:
\begin{enumerate}
\item[$\bullet$]
the embedding of $\Gamma_{k}(V)$ in $\Gamma_{n'-k'}(V'^{*})$
transferring every $X\in {\mathcal G}_k(V)$ to the annihilator of $f(X)$,
\item[$\bullet$]
the embedding of $\Gamma_{n-k}(V^{*})$ in $\Gamma_{k'}(V')$
transferring every $X\in {\mathcal G}_{n-k}(V^{*})$ to $f(X^{0})$.
\end{enumerate}
\end{lemma}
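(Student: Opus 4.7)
The plan is to translate the $l$-rigidity of $f$ across the annihilator duality by using the contragredient construction twice: once on the target side for the first embedding, and once on the source side for the second.

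For the first embedding, write $f_1:{\mathcal G}_k(V)\to{\mathcal G}_{n'-k'}(V'^{*})$, $f_1(X)=f(X)^0$. Given any $u\in{\rm GL}(V)$, I would invoke the $l$-rigidity of $f$ to obtain $u'\in{\rm GL}(V')$ with $u'_{k'}\circ f=f\circ u_k$. The candidate extension on the target is the contragredient $\check{u'}=((u')^{*})^{-1}\in{\rm GL}(V'^{*})$. Using the annihilator-compatibility recorded just before the lemma, namely $\check{u'}(T^{0})=(u'(T))^{0}$ for every $T\subset V'$, I would compute
$$\check{u'}(f_1(X))=\check{u'}(f(X)^{0})=(u'(f(X)))^{0}=(f(u_k(X)))^{0}=f_1(u_k(X)),$$
so $(\check{u'})_{n'-k'}$ extends $u_k$ through $f_1$, giving $l$-rigidity.

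For the second embedding, write $f_2:{\mathcal G}_{n-k}(V^{*})\to{\mathcal G}_{k'}(V')$, $f_2(X)=f(X^{0})$, where $X^{0}\subset V^{**}\cong V$ is the annihilator. Here the natural move is to start on the source side: given $u\in{\rm GL}(V^{*})$, form its contragredient $\check u\in{\rm GL}(V^{**})\cong{\rm GL}(V)$. The annihilator-compatibility now reads $\check u(X^{0})=(u(X))^{0}$ for $X\subset V^{*}$. Applying $l$-rigidity of $f$ to $\check u$ produces $u'\in{\rm GL}(V')$ satisfying $u'_{k'}\circ f=f\circ\check u_k$, and then
$$u'_{k'}(f_2(X))=u'_{k'}(f(X^{0}))=f(\check u(X^{0}))=f((u(X))^{0})=f_2(u_{n-k}(X)),$$
so $u'$ is the required extension of $u_{n-k}$ through $f_2$.

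Both parts reduce to one line of diagram chase once the contragredient has been identified as the correct carrier between $V$-level and $V^{*}$-level automorphisms. There is no real obstacle; the only thing to watch is that the contragredient is applied on the correct side (target for the first item, source for the second) and that one consistently uses the identification $V^{**}=V$ together with the formula $\check v(S^{0})=(v(S))^{0}$ recalled in the paragraph preceding the lemma.
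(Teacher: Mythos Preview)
Your proof is correct and follows essentially the same approach as the paper: both use the contragredient on the target side for the first embedding and on the source side for the second. The paper phrases the second part dually---starting from $u\in{\rm GL}(V)$, showing that $v$ extends ${\check u}_{n-k}$, and then observing that every element of ${\rm GL}(V^{*})$ arises as some ${\check u}$---but this is the same computation as yours run in the inverse direction.
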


\begin{proof}
Let $u$ be a linear automorphism of $V$.
Then $u_k$ is an automorphism of $\Gamma_{k}(V)$.
Suppose that it is extendable to the automorphism of $\Gamma_{k'}(V')$ induced by a linear automorphism $v\in {\rm GL}(V')$.
Then ${\check v}$ defines the extension of $u_k$ to an automorphism of $\Gamma_{n'-k'}(V'^{*})$.

Consider the second embedding.
The  contragradient ${\check u}$ is a linear automorphism of $V^{*}$ and ${\check u}_{n-k}$ is an automorphism
of $\Gamma_{n-k}(V^{*})$. It is clear that ${\check u}_{n-k}$ is extendable to the automorphism of $\Gamma_{k'}(V')$ induced by $v$.
This completes our proof,
since every linear automorphism of $V^{*}$ is the contragradient of a linear automorphism of $V$.
\end{proof}

We give two examples of $l$-rigid isometric embeddings of $\Gamma_{k}(V)$ in $\Gamma_{k'}(V')$.
In contrast with the previous section, we do not require that $k\le n-k$.

\begin{exmp}\label{exmp6-2}{\rm
Suppose that
$$k\le k'\;\mbox{ and }\;n-k\le n'-k'.$$
If $S\in {\mathcal G}_{k'-k}(V')$ then
$$\dim V'/S=n'-k'+k\ge n.$$
For every semilinear embedding $l:V\to V'/S$ the mapping $l_k$ is an $l$-rigid  isometric embedding of $\Gamma_{k}(V)$ in $\Gamma_{k}(V'/S)$.
As in the previous section, we denote by $\pi$ the mapping which transfers every subspace of $V'/S$ to the
corresponding subspace of $V'$.
Then $\pi l_k$ is an $l$-rigid isometric embedding of $\Gamma_{k}(V)$ in $\Gamma_{k'}(V')$ of type (A).
}\end{exmp}

\begin{exmp}\label{exmp6-3}{\rm
Suppose that
$$n\le k+k'\le n'$$
and $U\in {\mathcal G}_{k+k'}(V')$.
Let $v:V\to U^*$ be a semilinear embedding.
Then $v_k$ is  an $l$-rigid isometric embedding of $\Gamma_{k}(V)$ in $\Gamma_{k}(U^*)$.
By duality, it can be considered as an isometric embedding of $\Gamma_{k}(V)$ in $\Gamma_{k'}(U)$.
Lemma \ref{lemma6-1} guarantees that the latter embedding is $l$-rigid.
Since $U$ is contained in $V'$, we get an $l$-rigid isometric embedding of $\Gamma_{k}(V)$ in $\Gamma_{k'}(V')$ of type (B).
}\end{exmp}

It follows from Theorem \ref{theorem4-1} and the examples given above that
{\it every isometric embedding of $\Gamma_{k}(V)$ in $\Gamma_{k'}(V')$ is $l$-rigid
if $n=2k$.}
Now, we describe all $l$-rigid isometric embeddings of $\Gamma_{k}(V)$ in $\Gamma_{k'}(V')$  in the case when $k\ne n-k$.

\begin{theorem}\label{theorem6-1}
If $f$ is an  $l$-rigid isometric embedding of $\Gamma_{k}(V)$ in $\Gamma_{k'}(V')$
then one of the following possibilities is realized:
\begin{enumerate}
\item[{\rm (A)}] $k\le k'$, $n-k\le n'-k'$
and there is $S\in {\mathcal G}_{k'-k}(V')$
such that $f$ is induced by a semilinear embedding of $V$ in $V'/S$, see Example \ref{exmp6-2};
\item[{\rm (B)}] $n\le k+k'\le n'$ and there is $U\in {\mathcal G}_{k'+k}(V')$
such that $f$ is induced by a semilinear  embedding of $V$ in $U^{*}$, see Example \ref{exmp6-3}.
\end{enumerate}
\end{theorem}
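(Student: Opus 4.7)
The plan is to combine Theorem \ref{theorem4-1} with Theorem \ref{theorem5-1}, upgrading a semilinear $(2k)$-embedding to a semilinear embedding using the extra hypothesis of $l$-rigidity. I would first reduce to the case $k\le n-k$ by passing, if necessary, to the $l$-rigid embedding $\Gamma_{n-k}(V^{*})\to\Gamma_{k'}(V')$ supplied by Lemma \ref{lemma6-1}. Theorem \ref{theorem4-1} then gives either (A) $f=\pi l_{k}$ for a semilinear $(2k)$-embedding $l:V\to V'/S$ with $S\in{\mathcal G}_{k'-k}(V')$, or (B) $f$ is induced by a semilinear $(2k)$-embedding $V\to U^{*}$ with $U\in{\mathcal G}_{k'+k}(V')$. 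Applying the first part of Lemma \ref{lemma6-1} dualizes case (B) into case (A) in $V'^{*}$, so I restrict to case (A): $f=\pi l_{k}$.

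Given $u\in{\rm GL}(V)$, $l$-rigidity supplies $u'\in{\rm GL}(V')$ with $u'_{k'}\circ f=f\circ u_{k}$. For $X_{1},X_{2}\in{\mathcal G}_{k}(V)$ with $X_{1}\cap X_{2}=0$, isometry forces $\dim(f(X_{1})\cap f(X_{2}))=k'-k=\dim S$, and since both contain $S$ we get $f(X_{1})\cap f(X_{2})=S$. Applying $u'$ and using $u(X_{1})\cap u(X_{2})=0$ yields $u'(S)=S$, so $u'$ descends to $\bar{u}'\in{\rm GL}(V'/S)$. A similar argument with $X_{1}\cap X_{2}=\langle x\rangle$ gives $\langle l(X_{1})\rangle\cap\langle l(X_{2})\rangle=\langle l(x)\rangle$ in $V'/S$, whence $\bar{u}'(\langle l(x)\rangle)=\langle l(u(x))\rangle$. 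Hence $l(u(x))=c_{x}\bar{u}'(l(x))$ for some scalar $c_{x}\in R'^{*}$; a standard additivity argument on linearly independent pairs (using that $l$ is a $2$-embedding, so $l(x),l(y)$ are linearly independent) shows $c_{x}$ is a constant $c=c(u)$ commuting with $\sigma(R)$.

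To invoke Theorem \ref{theorem5-1}, I would pass through Theorem \ref{theorem5-2} at the projective level. Fix a base $B=\{x_{i}\}_{i=1}^{n}$ of $V$: each permutation $\pi$ of $B$ is realized by some $u_{\pi}\in{\rm GL}(V)$, whose descent $\bar{u}'_{\pi}$ permutes ${\mathcal X}(B):=\{\langle l(x_{i})\rangle\}\subset{\mathcal G}_{1}(V'/S)$ in the same way. By Theorem \ref{theorem5-2}, ${\mathcal X}(B)$ is an $(n-1)$-simplex or an independent subset of $\Pi_{V'/S}$. If ${\mathcal X}(B)$ is independent for some base, then $l(B)$ is linearly independent in $V'/S$, so $l$ is a semilinear embedding and $f$ is as in Example \ref{exmp6-2}; the dimension conditions $k\le k'$ and $n-k\le n'-k'$ follow from $\dim S=k'-k\ge 0$ and $\dim V'/S\ge n$.

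The main obstacle is ruling out the simplex possibility for every base. Adapting the final step of the proof of Theorem \ref{theorem5-1}, suppose $l(x_{n})=\sum_{i<n}a_{i}l(x_{i})$ with every $a_{i}\ne 0$; applying $\bar{u}'$ and using $l(u(x))=c(u)\bar{u}'(l(x))$ yields $l(u(x_{n}))=\sum_{i<n}c(u)a_{i}c(u)^{-1}l(u(x_{i}))$. Pick $v\in{\rm GL}(V)$ with $v(x_{i})=u(x_{i})$ for $i<n$ and $v(x_{n})\ne u(x_{n})$, so that analogously $l(v(x_{n}))=\sum_{i<n}c(v)a_{i}c(v)^{-1}l(u(x_{i}))$. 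Examining $vu^{-1}$, which fixes $u(x_{1}),\dots,u(x_{n-1})$, and using that any two lifts of the same Grassmann automorphism differ by a central scalar of $R'$ on the span of $l(V)$, I would arrange the lifts so that $c(u)^{-1}c(v)$ commutes with each $a_{i}$; this forces $l(u(x_{n}))=l(v(x_{n}))$, contradicting the injectivity of $l$ and ruling out the simplex case.
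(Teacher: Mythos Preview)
Your overall strategy—apply Theorem~\ref{theorem4-1} to obtain a semilinear $(2k)$-embedding and then use $l$-rigidity together with the idea behind Theorem~\ref{theorem5-1} to upgrade it to a full semilinear embedding—is exactly the paper's approach. You also correctly supply two steps the paper leaves implicit: that the $u'\in{\rm GL}(V')$ coming from $l$-rigidity must satisfy $u'(S)=S$ (so it descends to $\bar u'\in{\rm GL}(V'/S)$), and that $\bar u'(\langle l(x)\rangle)=\langle l(u(x))\rangle$ for every nonzero $x$ (via the intersection argument). However, there are two genuine gaps.

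\emph{The simplex elimination is not justified.} From the projective-level relation you obtain $l\,u=c(u)\,\bar u'\,l$ with $c(u)$ commuting with $\sigma(R)$, but not with all of $R'$; hence $c(u)\bar u'$ is only $R'$-semilinear (for the inner automorphism by $c(u)$), not $R'$-linear. Theorem~\ref{theorem5-1}, which the paper invokes, needs the vector-level diagram $w\,l=l\,u$ with $w\in{\rm GL}(V'/S)$ \emph{linear}: this is precisely what forces $a_i=b_i$ in its simplex-elimination step. Your substitute, ``arrange the lifts so that $c(u)^{-1}c(v)$ commutes with each $a_i$'', does not work as stated: two lifts $u'_1,u'_2\in{\rm GL}(V')$ satisfying $u'_{i,k'}\circ f=f\circ u_k$ only agree on the image of $f$, not on all of ${\mathcal G}_{k'}(V')$, so $(u'_2)^{-1}u'_1$ need not be a central scalar; on $l(V)$ it acts by a scalar commuting with $\sigma(R)$, which is not enough to force commutation with the $a_i\in R'$. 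Without closing this gap (or restricting to commutative $R'$), the contradiction $l(u(x_n))=l(v(x_n))$ does not follow.

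\emph{The reduction for $k>n-k$ is incomplete.} Passing to $\Gamma_{n-k}(V^{*})\to\Gamma_{k'}(V')$ via Lemma~\ref{lemma6-1} yields (after the analysis) a semilinear embedding of $V^{*}$, not of $V$, into the relevant target; moreover the type flips, so for $f$ of type (A) you land in case (B) for $V^{*}$. The paper spends an entire paragraph translating back: starting from a semilinear embedding $v:V^{*}\to U^{*}$ (with $U\in{\mathcal G}_{k'+n-k}(V')$), it defines $g(X)$ as the annihilator in $U$ of $\langle v(X^{0})\rangle$, identifies $S\in{\mathcal G}_{k'-k}(V')$ as the annihilator of $\langle v(V^{*})\rangle$, applies Theorem~\ref{theorem2-1} to $g|_{{\mathcal G}_1(V)}$ to produce a semilinear injection $l:V\to V'/S$, checks that $l$ is in fact a semilinear embedding, and verifies $f=\pi l_k$. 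Your proposal omits this reconstruction entirely, so the conclusion of Theorem~\ref{theorem6-1} (which speaks of semilinear embeddings of $V$, not of $V^{*}$) is not reached in the case $k>n-k$.
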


The following example shows that there are isometric embeddings of $\Gamma_{k}(V)$ in $\Gamma_{k'}(V')$
which are not $l$-rigid.

\begin{exmp}\label{exmp6-4}{\rm
Suppose that $R=F$ and $R'=F'$, where $F$ and $F'$ are the fields from Remark \ref{rem-main1}, and $n>n'$.
Let $l:V\to V'$ be a semilinear $n'$-embedding.
If $n'\ge 2k$ then $l_k$ is an isometric embedding of  $\Gamma_{k}(V)$ in $\Gamma_{k}(V')$.
By Theorem \ref{theorem6-1}, this embedding is not $l$-rigid.
}\end{exmp}

\begin{proof}[Proof of Theorem \ref{theorem6-1}]
Let $f$ be an $l$-rigid isometric embedding of $\Gamma_{k}(V)$ in $\Gamma_{k'}(V')$.

{\it Case $k<n-k$}.

If $f$ is an embedding of type (A) then, by Theorem \ref{theorem4-1},
there exist $S\in {\mathcal G}_{k'-k}(V')$ and a semilinear $(2k)$-embedding $l:V\to V'/S$ such that
$f$ transfers every $X\in {\mathcal G}_{k}(V)$ to the subspace of $V'$ corresponding to $\langle l(X)\rangle$.
Since $f$ is $l$-rigid, $l_k$ is $l$-rigid
and Theorem \ref{theorem5-1} implies that $l$ is a semilinear embedding.

In the case when $f$ is an embedding of type (B), Theorem \ref{theorem4-1} implies
the existence of $U\in{\mathcal G}_{k'+k}(V')$ and a semilinear $(2k)$-embedding $v:V\to U^{*}$
such that $f$ transfers every $X\in {\mathcal G}_{k}(V)$ to the annihilator of $\langle v(X)\rangle$ in $U$.
Since $f$ is an $l$-rigid embedding of $\Gamma_{k}(V)$ in $\Gamma_{k'}(U)$,
Lemma \ref{lemma6-1} implies that $v_k$ is $l$-rigid.
By Theorem \ref{theorem5-1}, $v$ is a semilinear embedding.

{\it Case $k>n-k$}.

Let $f$ be an embedding of type (A).
Consider the embedding of $\Gamma_{n-k}(V^*)$ in $\Gamma_{k'}(V')$ transferring every $X\in {\mathcal G}_{n-k}(V^*)$ to $f(X^0)$.
By Lemma \ref{lemma6-1}, this embedding is $l$-rigid; moreover, it is an embedding of type (B).
Therefore, there exist $U\in {\mathcal G}_{k'+n-k}(V')$ and a semilinear embedding $v:V^{*}\to U^{*}$
such that $f$ transfers every $X\in {\mathcal G}_{k}(V)$ to the annihilator of $\langle v(X^{0})\rangle$ in $U$.
Since $v$ is a semilinear embedding, the image of $v$ spans an $n$-dimensional subspace of $U^{*}$.
Denote by $S$ the annihilator of this subspace in $U$. Then $S\in {\mathcal G}_{k'-k}(V')$
and the image of $f$ is contained in $[S\rangle_{k}$.

Consider the mapping $g$ which sends every subspace $X\subset V$ to the annihilator of $\langle v(X^{0})\rangle$ in $U$.
This is an injection of the set of all subspaces of $V$ to the set of all subspaces of $U$ containing $S$.
The image of every ${\mathcal G}_{i}(V)$ is contained in $[S\rangle_{k'-k+i}$
and the restriction of $g$ to ${\mathcal G}_{k}(V)$ coincides with $f$. The mapping $g$ is inclusions preserving:
for any subspaces $X,Y\subset V$
$$X\subset Y\; \Longrightarrow\; g(X)\subset g(Y).$$
This implies that the restriction of $g$ to ${\mathcal G}_{1}(V)$ transfers every line of $\Pi_{V}$ to a subset in a line
of the projective space $[S\rangle_{k'-k+1}$. By Theorem \ref{theorem2-1}, this restriction is induced
by a semilinear injection $l:V\to V'/S$. We need to show that for every $X\in {\mathcal G}_{k}(V)$
$$g(X)=\pi(\langle l(X)\rangle),$$
where $\pi$ transfers every subspace of $V'/S$ to the corresponding subspace of $V'$.

Since $v$ is a semilinear embedding, $g$ sends every base of $\Pi_{V}$ to an independent subset of
the projective space $[S\rangle_{k'-k+1}$. This implies that $l$ is a semilinear embedding.
Let $X\in {\mathcal G}_{k}(V)$. We take $P_{1},\dots,P_{k}\in {\mathcal G}_{1}(V)$ such that
$$X=P_{1}+\dots+P_{k}.$$
Then
$$\pi(\langle l(X)\rangle)=g(P_{1})+\dots+g(P_{k})\subset g(X)$$
and we get the required equality (since our subspaces both are $k'$-dimensional).

Now, let $f$ be an embedding of type (B). 
We consider $f$ as an embedding of $\Gamma_{k}(V)$ in $\Gamma_{n'-k'}(V'^{*})$. 
This is an embedding of type (A) and, as in the proof of Theorem \ref{theorem4-1},
we establish that the image of $f$ is contained in
$$\langle U]_{k'},\;\;U\in{\mathcal G}_{k'+k}(V').$$
So, $f$ is an $l$-rigid isometric embedding of $\Gamma_{k}(V)$ in $\Gamma_{k'}(U)$. By duality, it can be considered as
an isometric embedding of $\Gamma_{k}(V)$ in $\Gamma_{k}(U^*)$ of type (A).
Lemma \ref{lemma6-1} implies that the latter embedding is $l$-rigid.
Thus it is induced by a semilinear embedding of $V$ in $U^{*}$.
\end{proof}

\end{document}